\title{When are radicals of Lie groups lattice-hereditary?}
\author{Andrew Geng}
\theoremstyle{plain}
\newtheorem{thm}{Theorem}[section]
\newtheorem{claim}[thm]{Claim}
\newtheorem{lemma}[thm]{Lemma}
\newtheorem{cor}[thm]{Corollary}
\theoremstyle{definition}
\newtheorem{defn}[thm]{Definition}
\newtheorem{eg}[thm]{Example}
\newtheorem{qq}[thm]{Question}
\theoremstyle{remark}
\newtheorem{rmk}[thm]{Remark}
\newtheorem{notation}[thm]{Notation}
\newcommand{\keyword}{\emph}
\newcommand{\op}{\operatorname}
\newcommand{\R}{\mathbb{R}}
\newcommand{\Z}{\mathbb{Z}}
\newcommand{\lie}{\mathfrak}
\begin{document}

\maketitle

\begin{abstract}
    This note aims to clarify
    what conditions on a connected Lie group $G$
    imply that its maximal connected normal solvable subgroup $R$
    intersects each lattice of $G$ as a lattice in $R$.
\end{abstract}

\section{Introduction}

\subsection{Motivation}

The purpose of this note is to clarify the situation about a fundamental
claim in the general study of lattices in Lie groups. The setup is as follows.

Recall that every connected Lie group $G$ is an extension
    \[ 1 \to R \to G \to \overline{S} \to 1, \]
with $R$ solvable and $\overline{S}$ semisimple. The subgroup
$R$, called the \keyword{radical} of $G$,
is the unique maximal connected, normal, and solvable subgroup of $G$.
A subgroup $S \subseteq G$, called the \keyword{semi\-simple part},
covers $\overline{S}$ via the map $G \to \overline{S}$
(not necessarily finitely) and is unique up to conjugacy.
This divides much of the study of a general Lie group into the
study of $R$ and $S$.

A \keyword{lattice} in $G$ is
a discrete subgroup $\Gamma$ for which $\Gamma\backslash G$ has finite measure
(induced by Haar measure on $G$). Attempting to achieve
the above division for lattices, one can ask the following.
\begin{qq}
    If $\Gamma$ is a lattice in $G$, is $\Gamma \cap R$ a lattice in $R$?
\end{qq}
When it is, one then obtains
    \( 1 \to \Gamma \cap R \to \Gamma \to \Gamma / (\Gamma \cap R) \to 1, \)
and it is known (see Thm.~\ref{heredity_and_closure} below)
that $\Gamma/(\Gamma \cap R)$ is then a lattice in $\overline{S}$.

One can also ask whether $\Gamma \cap N$ is a lattice in $N$
where $N$ is the unique maximal connected, normal, and \emph{nilpotent}
subgroup of $G$ (the \emph{nilradical}).
So define a Lie subgroup $H$ of $G$ to be \keyword{lattice-hereditary}
if $\Gamma \cap H$ is a lattice in $H$ for each lattice $\Gamma$ of $G$.
Raghunathan has made the following positive claim.
\begin{claim}[{\cite[Corollary 8.28]{raghunathan}}] \label{main_broken}
    If $G$ is connected and no compact factor
    of $S$ acts trivially on $R$,
    then $N$ and $R$ are both lattice-hereditary.
\end{claim}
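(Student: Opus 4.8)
The plan is to reduce both assertions to the single statement that the image $\rho(\Gamma)$ is discrete in $\overline{S}$, where $\rho\colon G \to \overline{S}$ is the quotient map. By Theorem~\ref{heredity_and_closure}, $R$ is lattice-hereditary exactly when $\Gamma R$ is closed for every lattice $\Gamma$, which is in turn equivalent to discreteness of $\rho(\Gamma)$, so the entire problem concentrates there. The nilradical then comes along for free: $N$ lies in $R$ and is the nilradical of the solvable group $R$, and a lattice in a solvable Lie group always meets the nilradical in a lattice, so $\Gamma \cap N = (\Gamma \cap R) \cap N$ is a lattice in $N$ once $\Gamma \cap R$ is a lattice in $R$.

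The foundation is the case in which $\overline{S}$ has no compact factors, where I would show $\rho(\Gamma)$ is automatically discrete. Here the tools are Borel-density and recurrence arguments: working in the closed subgroup $P = \overline{\Gamma R}$, in which $\Gamma$ is again a lattice, one analyzes the identity component $A^0$ of $A = P/R = \overline{\rho(\Gamma)}$ and rules out any noncompact directions — a noncompact simple factor or a noncompact solvable direction in $A^0$ would force a finite-covolume subgroup to project over a noncompact quotient in a way that discreteness and finiteness of volume forbid. With no compact factors present, this leaves $A^0 = \{e\}$.

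In the general case the same analysis leaves $A^0$ inside the maximal compact semisimple normal subgroup $K$ of $\overline{S}$, and the whole force of the hypothesis must go into showing this compact piece is trivial. The mechanism is that $\Gamma$ normalizes $R$, so conjugation makes $\rho(\Gamma)$, and hence its closure's identity component $A^0 \subseteq K$, act on the subgroup $\Gamma \cap R$ and on the closed subgroup of $R$ it generates. If that compact action is effective enough on the Euclidean subquotients in play, a Bieberbach/crystallographic argument forces the relevant translation subgroup to be a full lattice with a discrete stabilizer, contradicting $A^0 \neq \{e\}$; conversely, a compact factor acting trivially is exactly what permits the dense graph construction $\gamma \mapsto (\gamma, \psi(\gamma))$ in $R \times K$ that destroys heredity, which is why the hypothesis excludes it.

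I expect the main obstacle to sit precisely at this last step. The hypothesis controls the action of each compact factor on $R$ as a whole, but the crystallographic argument needs effectiveness on the successive subquotients of $R$ cut out by $\Gamma \cap R$ and the subgroups it spans — and a compact factor may act nontrivially on $R$ while acting trivially on such a subquotient, reopening the door to the dense graph construction. Carrying the descent through these subquotients while preserving effectiveness of the compact action, or else exhibiting the configuration where effectiveness genuinely fails, is the crux that I would examine most carefully.
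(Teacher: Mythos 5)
The statement you are trying to prove is false as written; it is Raghunathan's Claim 8.28, and the paper's Example \ref{eg:starkov} (due to Starkov) is a counterexample. In $G = \left(\R^2 \rtimes \op{SO}(1,1)^0\right) \times \left(\R^3 \rtimes \op{SO}(3)\right)$, no compact factor of $S = \op{SO}(3)$ acts trivially on $R = \left(\R^2 \rtimes \op{SO}(1,1)^0\right) \times \R^3$, yet there is a lattice $\Gamma$ meeting $R$ only in a copy of $\Z^5$ inside $N = \R^2 \times \R^3$, and that is not a lattice in $R$. Your own closing paragraph pinpoints exactly why no proof can close: the hypothesis controls the action of the compact factors on $R$, but the step that must kill the compact part of $\overline{\Gamma R}/R$ needs effectiveness on the quotient $R/N$, and a compact factor can act nontrivially on $R$ while acting trivially on $R/N$. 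That is precisely Starkov's configuration: $\op{SO}(3)$ moves $\R^3 \subseteq N$ but acts trivially on $R/N \cong \op{SO}(1,1)^0$. The corrected statement (Theorem \ref{main}(ii)) adds the hypothesis that no compact factor of $S$ acts trivially on $R/N$, which by Chevalley's theorem (Remark \ref{rmk:chevalley}) amounts to $S$ having no compact factor at all. So the ``obstacle'' you flag in your last paragraph is not a technical difficulty to be overcome; it is a genuine failure of the claim, and you should have concluded by exhibiting the counterexample rather than hoping to carry the descent through.

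A second, independent problem: you derive the heredity of $N$ from that of $R$, by applying Mostow's theorem on nilradicals of solvable groups to $R$ after $\Gamma \cap R$ is known to be a lattice. This gets the logical dependence backwards. Under the stated hypothesis $N$ \emph{is} lattice-hereditary (Theorem \ref{main}(i), due to Mostow) even when $R$ is not---in Starkov's example $\Gamma \cap N$ is a lattice in $N$ while $\Gamma \cap R$ is not a lattice in $R$---so any argument that routes $N$ through $R$ proves strictly less than what is true. The paper's route applies the solvable-group result not to $R$ but to $R_1 = \overline{\Gamma R}^0$: one shows $R_1$ is solvable (Auslander, Theorem \ref{thm:solvable_closure}), that its nilradical is still $N$ (this is where the hypothesis on the action on $R$ actually enters, via Lemma \ref{nilradical_is_maximal} and the containment $R_1 \subseteq RK$), and that $\Gamma \cap R_1$ is a lattice in $R_1$ (Theorem \ref{lemma:nested}); heredity of $N$ in the solvable group $R_1$ then gives heredity of $N$ in $G$. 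Only afterwards does one attempt $R$, by applying part (i) to $G/N$---which is exactly the step where the extra hypothesis on $R/N$ becomes unavoidable.
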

In \cite{starkov}, Starkov produced the counterexample
    \[ G = \left(\R^2 \rtimes \op{SO}(1,1)^0\right) \times \left(\R^3 \rtimes \op{SO}(3)\right) \]
(details below in Example \ref{eg:starkov}).
In response, Wu gave a revised proof in \cite[Proposition 1.3]{wu},
following Mostow's proof in \cite[Lemma 3.9]{mostow} that $N$ is hereditary.
An internet search for citations
indicates awareness of these papers by later authors
but scant further elaboration, found mostly in \cite[p.~107]{onishchik2}.
Except for Wu's comments in \cite[\S{}2]{wu} about why
one step in Raghunathan's proof is false,
no one points out specific mistakes in previous arguments.
Some authors have explicitly chosen to ``refrain from taking sides
in the discussion'' \cite[Rem.~6]{kammeyer_farrell-jones_2014}.

In his review \cite{hu88} of Wu's proof, Humphreys
encourages the reader to ``study these arguments independently,
since they involve a complicated mixture of techniques.''
Misprints (noted in the review) and sentence fragments
further complicate reading, and Wu's claim ultimately turns out
to be incorrect. Since the literature does not contain
a correction of Claim \ref{main_broken}
that accounts for what Wu's method can achieve,
the author wishes to give one (Theorem \ref{main}, below)
and reconcile it with other results.

\subsection{Results}

Applying Wu's revised proof
step-by-step to Starkov's counterexample
reveals that a step elided in Raghunathan's proof and made explicit in Wu's
is missing an assumption in both versions.
Adding it yields the following.
\begin{thm}[Revised Claim \ref{main_broken}] \label{main}
    Let $G$ be a connected Lie group whose semisimple part $S$ has no
    compact factor acting trivially on the radical $R$ of $G$. Then
    \begin{itemize}
        \item[(i)] The nilradical $N$ of $G$ is lattice-hereditary.
        \item[(ii)] If no compact factor of $S$ acts trivially on $R/N$,
            then $R$ is also lattice-hereditary.
    \end{itemize}
\end{thm}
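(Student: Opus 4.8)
The plan is to reduce both parts to the discreteness of a projection, via the standard closure criterion (of which Theorem~\ref{heredity_and_closure} records one direction): for a closed normal subgroup $H\trianglelefteq G$ and a lattice $\Gamma$, the subgroup $H$ is lattice-hereditary exactly when $\Gamma H$ is closed, which, the image being countable, is equivalent to the projection of $\Gamma$ to $G/H$ being discrete. Part~(i) then asks that the image of $\Gamma$ in $G/N$ be discrete, and part~(ii), once~(i) is in hand, asks that the image of $\Gamma$ in $G/R=\overline S$ be discrete. Before anything else I would record the structural fact $[\lie g,\lie r]\subseteq\lie n$, which makes $R/N$ central in $G/N$, with $G/N$ reductive and with semisimple quotient $\overline S=G/R$; in particular $R/N$ is a connected abelian group on which $G$ (and hence $S$) acts trivially by conjugation. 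This is what forces the hypothesis governing $R$-heredity to be read off $R/N$ rather than $R$, and it recasts~(ii) as the assertion that the radical $R/N$ of the reductive group $G/N$ is itself lattice-hereditary.

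For part~(i) I would follow Mostow's and Wu's treatment of the nilradical, studying the identity component $L$ of the closure of $\op{Ad}(\Gamma)$ in $\mathrm{GL}(\lie g)$ and comparing it with the Zariski closure of $\op{Ad}(G)$. A Borel-density-type argument should show that, once the compact factors of $S$ are accounted for, $\Gamma$ is large enough that these closures agree: the unipotent directions of $\Gamma$ already fill out $N$, while the remaining reductive directions are pinned down by the action on $\lie r$. This is where the standing hypothesis that no compact factor of $S$ acts trivially on $R$ is consumed---a factor centralizing all of $R$ would leave room for an extra connected piece of the closure, hence a failure of discreteness of the image in $G/N$---and ruling it out yields that $\Gamma N$ is closed.

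For part~(ii) I would pass to $G/N$, where by~(i) the image $\overline\Gamma$ of $\Gamma$ is a lattice, and run the same device one level up. Because $R/N$ is central, the task is to show that a lattice in a reductive group projects discretely onto its semisimple quotient $\overline S$, and the closure-and-density argument applies verbatim with the adjoint action on $\lie r/\lie n$ replacing that on $\lie r$. The obstruction is carried entirely by the compact factors: a compact factor $K$ of $S$ acting trivially on $R/N$ lets one tie a discrete $R/N$-coordinate to the powers of an infinite-order element of $K$, producing a lattice whose projection to $\overline S$ accumulates and is therefore not discrete. This is exactly Starkov's example, in which $K=\op{SO}(3)$ acts trivially on $R/N\cong\R$, a generator of the copy of $\Z$ in $R/N$ is paired with an irrational rotation, and the image of $\Gamma$ in $\overline S=\op{SO}(3)$ is a dense one-parameter winding; it satisfies hypothesis~(i), since $K$ acts nontrivially on $R$, yet violates the $R/N$-hypothesis of~(ii). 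Forbidding any compact factor from acting trivially on $R/N$ removes the winding and lets the density argument force discreteness.

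The main obstacle, and the heart of the correction, is this density/discreteness step in the presence of compact factors. Everything around it is bookkeeping about the filtration $N\subseteq R\subseteq G$ and the reductivity of $G/N$; the delicate point is that the classical Zariski-density input applies cleanly only after the compact factors have been isolated, so one must track by hand how a compact factor's action---on $R$ for~(i), on $R/N$ for~(ii)---controls whether it contributes a nondiscrete piece to the projection at hand. I expect the $R/N$-hypothesis to be needed precisely at the point Raghunathan's proof elides and Wu's makes explicit, namely the claim that the image of $\Gamma$ in $\overline S$ is discrete; checking that Starkov's group is the minimal way that claim can fail is what reconciles the revised statement with the counterexample.
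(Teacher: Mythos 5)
Your reduction of Part~(ii) to Part~(i) is exactly the paper's Step~4: pass to $G/N$, observe via Chevalley's theorem ($[\lie g,\lie r]\subseteq\lie n$) that $R/N$ is central and hence both the radical and the nilradical of $G/N$, apply Part~(i) there, and compose the quotient maps using the equivalences of Theorem~\ref{heredity_and_closure}. Your diagnosis of Starkov's example and of where the extra $R/N$-hypothesis enters is also correct.

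Part~(i), however, is where your proposal has a genuine gap. You propose to show directly that the image of $\Gamma$ in $G/N$ is discrete by a ``Borel-density-type argument'' comparing $\overline{\op{Ad}(\Gamma)}^0$ with the Zariski closure of $\op{Ad}(G)$, and you consume the hypothesis by saying that a compact factor centralizing $R$ ``would leave room for an extra connected piece of the closure.'' Ruling out one failure mode is not a proof that no failure occurs, and the density input does not apply ``cleanly after the compact factors are isolated'': even under the standing hypothesis, the closure $R_1=\overline{\Gamma R}^0$ can be strictly larger than $R$ and can genuinely contain compact semisimple directions --- the most one gets from Borel density (Mostow's Lemma 3.4(d)) is $R_1\subseteq RK$ with $K$ a maximal compact factor of $S$. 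The paper's proof therefore does not establish discreteness in $G/N$ directly; it shows (a) $R_1$ is solvable (Auslander, Theorem~\ref{thm:solvable_closure}, with a covering argument since $R$ need not be simply connected); (b) $N$ is a \emph{maximal connected nilpotent} subgroup of $RK$ (Lemma~\ref{nilradical_is_maximal}, which is where the hypothesis that no compact factor acts trivially on $R$ is actually spent, via a Jordan--Chevalley/averaging argument on $\lie s$), so $N$ is the nilradical of $R_1$; (c) $R_1$ is $\Gamma$-hereditary by the nested-subgroup measure lemma (Theorem~\ref{lemma:nested}); and (d) nilradicals of connected solvable groups are lattice-hereditary (Theorem~\ref{latts_in_sol}(ii)), whence $\Gamma\cap N$ is a lattice in $N$. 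None of (a)--(d) appears in your sketch, and your assertion that ``the unipotent directions of $\Gamma$ already fill out $N$'' is essentially the conclusion of Part~(i) restated rather than an argument for it. To complete the proof you would need to supply these ingredients, or an actual substitute for them.
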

Mostow proved Part (i) in \cite[Lemma 3.9]{mostow}.
The new assumption, in (ii), is required by
Starkov's example (see Remark \ref{rmk:what_is_broken} below).
In fact, via a theorem of Chevalley, (ii) is a case of
\cite[Thm.~1]{auslander_radicals_1963} by Auslander
(see Remark \ref{rmk:chevalley}).
So what Wu's method does is to unify these theorems of
Mostow and Auslander, which we express for all connected $G$ as follows.
(See Section \ref{sec:proof_cor} for the proof.)
\begin{cor} \label{cor:main}
    Let $G$ be a connected Lie group with radical $R$, nilradical $N$, and semisimple part $S$.
    Let $C$ and $S_K$ be the maximal connected semisimple compact normal subgroups
    of $G$ and $S$, respectively. The following are lattice-hereditary in $G$.
        \[ C \subseteq NC \subseteq NS_K \subseteq RS_K \]
\end{cor}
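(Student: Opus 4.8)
The plan is to deduce all four cases from Theorem~\ref{main} by transferring lattice-heredity across the two \emph{normal} subgroups $C$ and $NC$, using three soft principles. First, a compact subgroup $K$ is automatically lattice-hereditary, because any discrete subgroup of $K$ is finite and every finite subgroup of a compact group is a lattice. Second, by the converse direction of Theorem~\ref{heredity_and_closure}, if $A \trianglelefteq B$ is closed then a discrete $\Lambda \le B$ is a lattice as soon as $\Lambda \cap A$ is a lattice in $A$, $\Lambda A$ is closed, and $\Lambda A/A$ is a lattice in $B/A$; the closedness is free whenever $B/A$ is compact. Third, combining these, if $A$ is lattice-hereditary in a group $\check G$ and $H \supseteq A$ is a subgroup with $H/A$ compact, then $H$ is lattice-hereditary in $\check G$. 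Before starting I would record the structural facts that the compact factors of $S$ acting trivially on $R$ are exactly the factors of $C$, that $C \subseteq S_K$, and that $[R,R] \subseteq N$ so $R/N$ is abelian.

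The subgroup $C$ is compact, hence lattice-hereditary by the first principle. For $NC$ I would pass to $\overline G := G/C$. Since the compact factors of $S$ acting trivially on $R$ are precisely those in $C$, the semisimple part of $\overline G$ has no compact factor acting trivially on its radical, so $\overline G$ satisfies the hypothesis of Theorem~\ref{main}. Identifying the nilradical of $\overline G$ with $NC/C$, part~(i) shows $NC/C$ is lattice-hereditary in $\overline G$. Transferring back through the compact kernel $C$ by the second principle—applied to the descended lattice $\Gamma C/C$ in $\overline G$, where the needed closedness follows because $C$ is compact—gives that $NC$ is lattice-hereditary in $G$.

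For the two largest terms I would work in $\check G := G/NC$, whose radical $A = R/N$ is abelian. As $NC$ is normal and lattice-hereditary, each lattice $\Gamma$ of $G$ descends to a lattice of $\check G$, and by the two-out-of-three principle it suffices to prove the images $\overline{NS_K}$ and $\overline{RS_K}$ are lattice-hereditary in $\check G$. The image of $NS_K$ is the compact group $\bar S_K$, so $NS_K$ is a compact extension of $NC$ and the third principle concludes. The image of $RS_K$ is $A\bar S_K$, a compact extension of $A$; so by the third principle it is enough that $A$—or, to be safe, $A$ together with the compact factors of $\bar S_K$ acting trivially on $A$—is lattice-hereditary in $\check G$. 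Letting $\check C \subseteq \bar S_K$ be those trivially-acting factors, the quotient $\check G/\check C$ satisfies the hypothesis of Theorem~\ref{main}, and since its radical $A$ equals its own nilradical the extra hypothesis of part~(ii) is vacuous; thus part~(ii) gives $A$ lattice-hereditary in $\check G/\check C$, which pulls back through the compact $\check C$ to yield $A\check C$, and then $A\bar S_K$, lattice-hereditary in $\check G$.

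The main obstacle is the $RS_K$ step: it is the only place part~(ii) of Theorem~\ref{main} is genuinely invoked, and the delicate points are checking that its hypotheses survive each quotient—that modding by $C$ and then by $\check C$ removes exactly the compact factors that would violate them, and that passing to $G/NC$ really does make the radical abelian so the part~(ii) hypothesis becomes vacuous. Secondary care is needed for the structural identifications (that the nilradical of $G/C$ is $NC/C$, and that $C \subseteq S_K$ and $\check C \subseteq \bar S_K$) and for the finite subgroups $R \cap C$ and $\bar S_K \cap A$, which make the relevant extensions only almost-direct but alter covolumes by finite factors without affecting the lattice property. Starkov's example serves as the sanity check: there $C$ is trivial, $G/NC \cong \R \times \op{SO}(3)$ has $\check C = \bar S_K = \op{SO}(3)$ acting trivially on $A = \R$, and the argument correctly certifies $RS_K = G$ and $NS_K$ as lattice-hereditary while declining to conclude the same for $R$.
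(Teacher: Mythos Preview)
Your proof is correct and follows essentially the same chain of quotients as the paper: $C$ is compact, $NC/C$ is the nilradical of $G/C$ (Theorem~\ref{main}(i)), $NS_K/NC$ is compact, and $RS_K/NS_K$ is handled by Theorem~\ref{main} in the quotient. Two minor points where you take an unnecessary detour: by Chevalley's theorem (Theorem~\ref{thm:chevalley}) \emph{all} of $S$ acts trivially on $R/N$, so your $\check C$ is automatically all of $\bar S_K$, and $\check G/\check C$ is exactly $G/(NS_K)$---the paper passes there directly. For the same reason, in that quotient the radical equals the nilradical and the semisimple part has no compact factors, so part~(i) already gives the conclusion; part~(ii) is never genuinely needed (contrary to what you say), and the paper's proof uses only~(i) throughout.
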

\begin{rmk} \label{rmk:auslander}
    Part (i) of Theorem \ref{main} is recovered by assuming $C = \{1\}$,
    and Part (ii) by assuming $S_K = \{1\}$.
    Auslander's result, as originally stated, is the heredity of $RS_K$.\footnote{
        The version in
        \cite[Thm.~1]{auslander_radicals_1963}
        requires uniform (i.e. cocompact) lattices.

        For the non-uniform case (which is claimed in
        e.g.\ \cite[I.4 Thm.~1.7]{onishchik3}),
        one could replace \cite[Prop.~3]{auslander_radicals_1963}
        with \cite[Lemma 3.4(d)]{mostow} to show $\Gamma R K$
        is closed with identity component $RK$ and use
        \cite[Lemma 2.5]{mostow_homogeneous_1962} to finish.
        We will instead use these ingredients in Theorem \ref{main}
        and use that to prove this part of Corollary \ref{cor:main}.
    }
\end{rmk}

After definitions and basic facts in Section \ref{sec:bg},
we recall examples (including Starkov's) in Section \ref{sec:examples}
and the proof in Section \ref{sec:proof}. This includes the correction,
an alternative proof of the key Lemma \cite[Lemma 3.8]{mostow},
a remark on where Starkov's example fits in,
and the proof of Corollary \ref{cor:main}.
We mention related statements in Section \ref{sec:others}.

\paragraph{Acknowledgements.}
The author wishes to thank Benson Farb for helpful discussions and extensive
comments during the preparation of this note. Thanks are also due to Daniel
Studenmund for a careful proofreading, to Dave Witte Morris for comments
on a draft, and to an anonymous referee whose review led to
a couple of corrections.

\section{Background} \label{sec:bg}

This section contains definitions of concepts mentioned
in the introduction and used in the sequel
(e.g.\ radical, nilradical, and heredity),
along with some facts used in the examples and proof.
Most of this material can be found in the books \cite{raghunathan},
\cite{onishchik2}, and \cite{onishchik3}.

\subsection{Relevant subgroups of Lie groups}

\begin{defn}[Levi decomposition, see e.g.\ {\cite[\S{}1.4]{onishchik3}}]
    \label{defn:levi}
    If $G$ is a connected Lie group, then $G = RS$ where $R$
    is the unique maximal connected normal solvable subgroup of $G$
    and $S$ is a semisimple virtual Lie subgroup covering $G/R$.
    The group $R$ is called the \keyword{radical}.
    The group $S$ is called the \keyword{semisimple part}
    and is unique up to conjugacy \cite[Thm.~1.4.3]{onishchik3}.
\end{defn}
\begin{defn}[Nilradical, see e.g.\ {\cite[\S{}2.5]{onishchik3}}]
    \label{defn:nilrad}
    If $G$ is a Lie group, its \keyword{nilradical} $N$
    is its unique maximal connected normal nilpotent subgroup.
    It coincides with the nilradical of $R$.
\end{defn}

The nilradical may not be part of an analogous decomposition---i.e.\ $G \to G/N$
may not restrict to a covering homomorphism on any virtual Lie subgroup\footnote{
    For example, if $G = \R^4 \rtimes H$
    where $H$ is the Heisenberg group acting through the composition
    $H \to H/Z(H) \cong \R^2 \to \op{SO}(2) \times \op{SO}(2)$,
    then $G$ has nilradical $N = \R^4 \times Z(H)$ and
    $G \to G/N \cong \R^2$ has no homomorphic section.
    However, see e.g.\ \cite[\S{}1.6.4]{onishchik3} for
    a related decomposition.
}---but it has the following useful relationship with $R$,
due to Chevalley.
\begin{thm}[see e.g.\ {\cite[II.7 Thm.~13]{jacobson}}] \label{thm:chevalley}
    Let $\lie{n}$, $\lie{r}$, and $\lie{g}$ be the Lie algebras of $N$, $R$, and $G$.
    Then $[\lie{g}, \lie{r}] \subseteq \lie{n}$.
\end{thm}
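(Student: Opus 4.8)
The statement is purely Lie-algebraic, so the plan is to show directly that $[\lie{g},\lie{r}]$ is a \emph{nilpotent ideal} of $\lie{g}$; since $\lie{n}$ is by definition the maximal nilpotent ideal, the containment $[\lie{g},\lie{r}]\subseteq\lie{n}$ follows at once. First I would check that $[\lie{g},\lie{r}]$ is an ideal: because $\lie{r}$ is an ideal we have $[\lie{g},\lie{r}]\subseteq\lie{r}$, and for $x,y\in\lie{g}$, $r\in\lie{r}$ the Jacobi identity gives $[x,[y,r]]=[[x,y],r]+[y,[x,r]]$, where both terms lie in $[\lie{g},\lie{r}]$ (the second because $[x,r]\in\lie{r}$). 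Granting this, Engel's theorem reduces the goal to showing that every element of $[\lie{g},\lie{r}]$ acts nilpotently in the adjoint representation $\op{ad}\colon\lie{g}\to\lie{gl}(\lie{g})$: its restriction to the invariant subspace $[\lie{g},\lie{r}]$ is then also nilpotent, so Engel applies to $[\lie{g},\lie{r}]$ as a Lie algebra and makes it nilpotent.

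The crux is therefore the following representation-theoretic fact, which I would prove in general and then apply to $\rho=\op{ad}$ with the solvable ideal $\lie{a}=\lie{r}$: for any finite-dimensional representation $\rho\colon\lie{g}\to\lie{gl}(V)$ and any solvable ideal $\lie{a}\trianglelefteq\lie{g}$, every operator in $\rho([\lie{g},\lie{a}])$ is nilpotent. To prove this I would extend scalars to $\mathbb{C}$ (nilpotency of an operator is unaffected) so that Lie's theorem applies to the solvable algebra $\rho(\lie{a})$, yielding a full flag $0=V_0\subset V_1\subset\cdots\subset V_n=V$ with each $V_i$ stable under $\rho(\lie{a})$ and each $a\in\lie{a}$ acting on $V_i/V_{i-1}$ by a scalar $\lambda_i(a)$, so that $\lambda_i\in\lie{a}^*$. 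Since $[\lie{g},\lie{a}]\subseteq\lie{a}$, the operators $\rho([\lie{g},\lie{a}])$ are themselves upper-triangular in a basis adapted to this flag, with diagonal entries $\lambda_i([x,a])$. Thus the lemma reduces to showing that every weight $\lambda_i$ vanishes on $[\lie{g},\lie{a}]$.

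That vanishing is where the real content sits, and I expect it to be the main obstacle. I would establish it using the invariance property underlying Lie's theorem: for a weight $\lambda\in\lie{a}^*$ the space $W=\{v\in V:\rho(a)v=\lambda(a)v\ \text{for all}\ a\in\lie{a}\}$ is invariant under all of $\lie{g}$ (this is precisely the characteristic-zero lemma that drives the inductive step in Lie's theorem). Granting it, fix $x\in\lie{g}$ and $a\in\lie{a}$; then for $w\in W$, using $\rho(x)w\in W$, one computes $\rho([x,a])w=\rho(x)\rho(a)w-\rho(a)\rho(x)w=\lambda(a)\rho(x)w-\lambda(a)\rho(x)w=0$. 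On the other hand $[x,a]\in\lie{a}$, so $\rho([x,a])$ acts on the nonzero space $W$ as the scalar $\lambda([x,a])$; comparing the two gives $\lambda([x,a])=0$, the required vanishing.

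Putting the pieces together, the lemma applied to $\rho=\op{ad}$ and $\lie{a}=\lie{r}$ shows that $\op{ad}([\lie{g},\lie{r}])$ consists of nilpotent operators; by Engel's theorem $[\lie{g},\lie{r}]$ is a nilpotent Lie algebra, and being an ideal it is contained in the maximal nilpotent ideal $\lie{n}$, giving $[\lie{g},\lie{r}]\subseteq\lie{n}$. The only delicate ingredient is the $\lie{g}$-invariance of weight spaces (hence the role of characteristic zero); the remaining steps are formal applications of the Jacobi identity together with the theorems of Lie and Engel.
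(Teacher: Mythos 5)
The paper itself does not prove this statement---it simply cites Jacobson---so your proposal has to stand as a self-contained argument. Its architecture is the classical one behind the cited result: show that $[\lie{g},\lie{r}]$ is an ideal all of whose elements are $\op{ad}$-nilpotent, then invoke Engel's theorem and the maximality of the nilradical. The ideal computation via Jacobi, the reduction through Engel, the extension of scalars, and the triangularization of $\rho(\lie{a})$ by Lie's theorem are all correct.

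The gap is in the final vanishing step. You apply the invariance lemma to the simultaneous weight space $W=\{v\in V:\rho(a)v=\lambda_i(a)v\ \text{for all}\ a\in\lie{a}\}$ and then compare scalars on ``the nonzero space $W$''---but for $i\geq 2$ the weight $\lambda_i$ is only a weight of the subquotient $V_i/V_{i-1}$, and its simultaneous weight space in $V$ itself can be zero. For instance, take $\lie{a}$ spanned by $A=\op{diag}(1,2)$ and the nilpotent $B$ with $Be_2=e_1$, $Be_1=0$; then $[A,B]=-B$, so $\lie{a}$ is a two-dimensional solvable algebra, the flag weight $\lambda_2$ has $\lambda_2(A)=2$ and $\lambda_2(B)=0$, yet no nonzero vector satisfies both $Av=2v$ and $Bv=0$. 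So your argument as written establishes $\lambda_1([x,a])=0$ but not $\lambda_i([x,a])=0$ for higher $i$. The standard repair is an induction on $\dim V$: Lie's theorem supplies some weight $\lambda$ whose weight space $W$ is nonzero; $W$ is $\lie{g}$-stable by the invariance lemma; your computation shows $\rho([x,a])$ vanishes on $W$; and by the inductive hypothesis $\rho([x,a])$ acts nilpotently on the smaller $\lie{g}$-module $V/W$ (note you cannot instead quotient by $V_1$, which is only $\lie{a}$-stable), whence $\rho([x,a])$ is nilpotent on $V$. With that induction inserted, the proof is complete.
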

\begin{cor}
    $R/N$ and $\lie{r}/\lie{n}$ are abelian, and $G$
    acts trivially on them.
\end{cor}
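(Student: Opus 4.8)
The plan is to derive everything from Chevalley's containment $[\lie{g},\lie{r}] \subseteq \lie{n}$ (Theorem~\ref{thm:chevalley}), first at the level of Lie algebras and then transferring to the groups using connectedness. I would begin by observing that $\lie{r}/\lie{n}$ is abelian: since $\lie{r} \subseteq \lie{g}$, the theorem gives $[\lie{r},\lie{r}] \subseteq [\lie{g},\lie{r}] \subseteq \lie{n}$, so the bracket vanishes modulo $\lie{n}$. Because $N$ is a connected normal subgroup of the connected group $R$ (indeed $N$ is the nilradical of $R$ by Definition~\ref{defn:nilrad}), the quotient $R/N$ is a connected Lie group with $\op{Lie}(R/N) = \lie{r}/\lie{n}$; a connected Lie group whose Lie algebra is abelian is itself abelian, so $R/N$ is abelian.

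For the triviality of the action, note that $G$ acts on $R/N$ by conjugation since both $R$ and $N$ are normal in $G$, and the corresponding infinitesimal action on $\lie{r}/\lie{n}$ is induced by $\op{ad}$. For each $X \in \lie{g}$, Chevalley's theorem gives $\op{ad}_X(\lie{r}) = [X,\lie{r}] \subseteq \lie{n}$, so $\op{ad}_X$ descends to the zero endomorphism of $\lie{r}/\lie{n}$. Exponentiating, $\op{Ad}(\exp X)$ acts on $\lie{r}/\lie{n}$ as $\exp(\op{ad}_X) = \op{id}$. Since $G$ is connected it is generated by $\exp(\lie{g})$, so $\op{Ad}(g)$ restricts to the identity on $\lie{r}/\lie{n}$ for every $g \in G$; that is, $G$ acts trivially on $\lie{r}/\lie{n}$.

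Finally I would pass from the Lie algebra to the group. For each $g \in G$, conjugation induces an automorphism $c_g$ of $R/N$ whose differential at the identity is the action of $\op{Ad}(g)$ on $\lie{r}/\lie{n}$, which the previous step shows is the identity. An endomorphism of a connected Lie group is determined by its differential at the identity (via $c_g \circ \exp = \exp \circ\, d(c_g)_e$ together with the fact that $R/N$ is generated by a neighborhood of the identity), so $c_g = \op{id}$, and hence $G$ acts trivially on $R/N$.

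The one place where I expect to have to be careful—rather than where I anticipate genuine difficulty—is this last transfer from the infinitesimal statement to the group statement: it uses both the connectedness of $G$ (to reduce $\op{Ad}(G)$ to exponentials of $\op{ad}$) and the connectedness of $R/N$ (so that the automorphism $c_g$ is pinned down by its derivative). The remaining assertions are immediate consequences of the bracket containment.
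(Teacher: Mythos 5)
Your proof is correct and follows exactly the route the paper intends: the corollary is stated without proof as an immediate consequence of Chevalley's containment $[\lie{g},\lie{r}]\subseteq\lie{n}$, and your argument (bracket vanishes mod $\lie{n}$, then transfer to the connected groups $R/N$ and $G$ via the exponential) is the standard way to fill in that omitted one-liner.
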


\subsection{Heredity}

This section defines heredity
and recalls some related properties and theorems.

\begin{defn}[Heredity, following {\cite[\S{}I.1.4.2]{onishchik2}}]
    In a Lie group $G$, let
    $\Gamma$ be a lattice, and let
    $H$ be a closed (i.e.\ Lie) subgroup of $G$.
    \begin{itemize}
        \item $H$ is \keyword{$\Gamma$-hereditary}
            if $H \cap \Gamma$ is a lattice in $H$.
        \item $H$ is \keyword{lattice-hereditary} if it
            is $\Gamma$-hereditary for every lattice $\Gamma$ in $G$.
    \end{itemize}
\end{defn}

The statements of Theorem \ref{main} in \cite[8.28]{raghunathan}
and \cite[Prop.~1.3]{wu} include as conclusions some properties that
are equivalent to heredity, so we note the equivalence now.
\begin{thm}[{\cite[I.1 Thm.~4.3,5,7]{onishchik2}}] \label{heredity_and_closure}
    Let $\Gamma$ and $H$ be a lattice and a closed subgroup, respectively,
    of a Lie group $G$.
    If either $\Gamma$ is uniform (i.e.\ $\Gamma\backslash G$ is compact)
    or $H$ is normal, then the following are equivalent.
    \begin{itemize}
        \item $H\Gamma$ is a closed subset of $G$.
        \item $H$ is $\Gamma$-hereditary.
        \item The image of $\Gamma$ in $G/H$ is discrete.
        \item The image of $\Gamma$ in $G/H$ is a lattice
            (when $H$ is normal).
    \end{itemize}
\end{thm}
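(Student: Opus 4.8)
The plan is to separate the four conditions into a topological equivalence and a pair of measure-theoretic implications, arranged in a cycle, and to isolate the single step where the hypothesis ``$\Gamma$ uniform or $H$ normal'' is actually needed. Write $\pi\colon G\to G/H$ for the quotient onto the space of cosets, an open continuous map; its full preimage $\pi^{-1}(\pi(\Gamma))$ is $\Gamma H$, and since inversion $g\mapsto g^{-1}$ is a homeomorphism carrying $\Gamma H$ to $H\Gamma$, closedness of $H\Gamma$ is equivalent to closedness of $\pi(\Gamma)$ in $G/H$. Moreover a closed image is automatically discrete: $\pi(\Gamma)$ is countable and is a single orbit of $\Gamma$ acting on $G/H$ by homeomorphisms, so a nonempty closed countable subset of the locally compact (hence Baire) space $G/H$ has an isolated point, and homogeneity of the orbit makes every point isolated. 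This gives for free that $H\Gamma$ closed implies the image is discrete.

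Next I would prove that a discrete image implies heredity, with no extra hypothesis. The map $(\Gamma\cap H)\backslash H\to\Gamma\backslash G$ induced by $h\mapsto\Gamma h$ is injective, its image being the orbit of the base point under the natural $H$-action. Choosing a product neighborhood $\pi^{-1}(B)\cong H\times B$ with $B\subseteq G/H$ a transversal small enough that $\pi(\Gamma)\cap B=\{\pi(e)\}$---possible \emph{because} the image is discrete---one checks that the corresponding slab descends injectively to $\Gamma\backslash G$ with measure $\op{vol}\big((\Gamma\cap H)\backslash H\big)\cdot\op{vol}(B)$. Finiteness of $\op{vol}(\Gamma\backslash G)$ then forces $\op{vol}\big((\Gamma\cap H)\backslash H\big)<\infty$, so $\Gamma\cap H$, being discrete, is a lattice in $H$. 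The last condition---that the image be a \emph{lattice} in the normal case---will drop out once discreteness is known, through Weil's integration formula
\[ \op{vol}(\Gamma\backslash G)=\op{vol}\big((\Gamma\cap H)\backslash H\big)\cdot\op{vol}\big(\pi(\Gamma)\backslash(G/H)\big), \]
whose finite left side and finite positive first factor bound the covolume of the image; the reverse implication, that a lattice is discrete, is immediate.

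What remains, and what I expect to be the crux, is the implication from heredity back to $H\Gamma$ being closed; this is precisely where the hypothesis is \emph{indispensable}. Without it the implication is false: for $G=\op{SL}_2(\R)$, $\Gamma=\op{SL}_2(\Z)$, and $H$ the upper-triangular unipotent subgroup $\cong\R$, the intersection $\Gamma\cap H\cong\Z$ is even cocompact in $H$, yet every $H$-orbit in $\Gamma\backslash G$ is dense, so $H\Gamma$ is not closed. In the normal case $G/H$ is a Lie group and $\pi(\Gamma)$ is a countable subgroup; here I would disintegrate the finite invariant measure on $\Gamma\backslash G$ along the $H$-orbits---legitimate exactly because normality makes $G/H$ a group carrying an invariant measure---so that finiteness of the base measure, against fibers $(\Gamma\cap H)\backslash H$ of finite positive volume, forces the image to be discrete; a discrete subgroup is closed, and hence so is $H\Gamma=\pi^{-1}(\pi(\Gamma))$. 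In the uniform case normality is unavailable, and I would instead use compactness of $\Gamma\backslash G$ directly: a putative limit point of $H\Gamma$ can be pushed into a fixed compact fundamental region, where cocompactness extracts a genuine accumulation of distinct $(\Gamma\cap H)$-cosets in $H$, contradicting that $\Gamma\cap H$ is a lattice. Closing the cycle with the topological equivalences of the first paragraph then yields all four conditions equivalent, with the normal-case lattice statement supplied by Weil's formula.
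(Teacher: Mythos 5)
The paper does not actually prove this theorem---it is quoted from Onishchik--Vinberg---so I can only judge your argument on its own terms, and it contains a genuine error: the hypothesis ``$\Gamma$ uniform or $H$ normal'' is spent on the wrong arrow. Your middle implication, ``discrete image $\Rightarrow$ heredity, with no extra hypothesis,'' is false, and the paper's own Example \ref{closed_but_not_hereditary} refutes it in combination with your cycle: for $G=\op{SL}(2,\R)$, $\Gamma=\op{SL}(2,\Z)$, and $H$ the diagonal subgroup, $H\Gamma$ is closed, so by your (correct) first paragraph the image of $\Gamma$ in $G/H$ is discrete; yet $\Gamma\cap H=\{\pm I\}$ is finite and hence not a lattice in $H\cong\R^\times$. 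The gap is in the slab argument: for $D\cdot B$ (with $D$ a fundamental domain for $\Gamma\cap H$ in $H$ and $B$ a transversal) to descend injectively to $\Gamma\backslash G$, you must rule out every $\gamma\in\Gamma$ lying in $BHB^{-1}$ but not in $H$, whereas discreteness of $\pi(\Gamma)$ only controls $\Gamma\cap BH$. Normality rescues this because $BHB^{-1}=BB^{-1}H$; without it (and without compactness of $\Gamma\backslash G$), recurrence of the $H$-orbit---exactly the divergent-geodesic phenomenon in the example above---destroys injectivity of the slab. This is where the hypothesis must enter.

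Your last paragraph compounds the problem. The implication ``heredity $\Rightarrow$ $H\Gamma$ closed'' in fact holds with no hypothesis at all: if $(\Gamma\cap H)\backslash H$ has finite invariant volume, the natural map $(\Gamma\cap H)\backslash H\to\Gamma\backslash G$ is proper and $\Gamma H$ is closed \cite[Thm.~1.13]{raghunathan}. Your purported counterexample is incorrect: for $H$ the upper unipotent subgroup of $\op{SL}(2,\R)$ and $\Gamma=\op{SL}(2,\Z)$, the orbit of the identity coset is a \emph{periodic} horocycle---compact, hence closed---not dense; Hedlund's dichotomy says horocycle orbits are periodic or dense, and periodicity here is forced precisely because $\Gamma\cap H\cong\Z$ is cocompact in $H$. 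The correct architecture is therefore: (heredity) $\Rightarrow$ ($H\Gamma$ closed) $\Rightarrow$ (image discrete) unconditionally, with the hypothesis needed only to return---in the uniform case because a closed orbit in a compact quotient is compact and is homeomorphic to $(\Gamma\cap H)\backslash H$ by a Baire/Effros argument, and in the normal case via your slab argument, which is then valid.
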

\begin{eg} \label{closed_but_not_hereditary}
    It can happen that
    $H\Gamma$ is closed but $H$ is not $\Gamma$-hereditary
    for non-normal $H$ and non-uniform $\Gamma$,
    e.g.\ if $G = \op{SL}(2,\R)$ and $\Gamma = \op{SL}(2,\Z)$
    with $H$ being the diagonal matrices.
    \cite[I.1 Example 4.6]{onishchik2}
\end{eg}
\begin{rmk}
    Example \ref{closed_but_not_hereditary} is not a counterexample
    to \cite[Lemma 2.5]{mostow_homogeneous_1962} (Lemma \ref{lemma:nested} below)
    because here $H\Gamma \subseteq G$ is only a subset, not a subgroup.
\end{rmk}
To study lattices in general Lie groups,
we will use two facts about lattices in solvable groups,
due to Mostow.
\begin{thm} \label{latts_in_sol}
    Let $G$ be a connected solvable Lie group.
    \begin{itemize}
        \item[(i)] Every lattice of $G$ is uniform
            \cite[Thm.~6.2]{mostow_homogeneous_1962}.
        \item[(ii)] The nilradical $N$ of $G$ is lattice-hereditary
            \cite[\S{}5]{mostow_factor_1954}.\footnote{
                Cited as ``Theorem 4.1'' from \cite{mostow_factor_1954} in
                the proof of \cite[Lemma 3.9]{mostow}.
            }
    \end{itemize}
\end{thm}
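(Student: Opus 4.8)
The plan is to prove the two parts in order, using (i) to reduce (ii) to a closedness statement. For part (i), I would first reduce to the simply connected case: passing to the universal cover $\pi\colon \tilde G\to G$ and replacing $\Gamma$ by its full preimage $\tilde\Gamma=\pi^{-1}(\Gamma)$ (which contains $\ker\pi$), one gets a canonical identification $\tilde\Gamma\backslash \tilde G\cong \Gamma\backslash G$, so $\Gamma$ is uniform in $G$ if and only if $\tilde\Gamma$ is uniform in $\tilde G$, and $\tilde\Gamma$ has finite covolume whenever $\Gamma$ does. Thus I may assume $G$ is simply connected solvable, hence diffeomorphic to $\R^n$, with all terms of its derived series closed and simply connected.

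Now I would induct on $\dim G$, the base case $G\cong \R^n$ being the elementary fact that a discrete subgroup of finite covolume in $\R^n$ has rank $n$ and is therefore cocompact. For the inductive step, let $A$ be the last nontrivial term of the derived series of $G$; it is a connected, simply connected (hence $A\cong \R^m$), abelian, closed normal subgroup. The crux is to show that $A$ is $\Gamma$-hereditary, equivalently (Theorem \ref{heredity_and_closure}, $A$ being normal) that the image of $\Gamma$ in $G/A$ is discrete. Granting this, $\Gamma\cap A$ is a lattice in the vector group $A$ and hence cocompact there, while $\Gamma/(\Gamma\cap A)$ is a lattice in the lower-dimensional solvable group $G/A$ and hence cocompact by induction; cocompactness in the normal subgroup and in the quotient then lifts to cocompactness of $\Gamma$ in $G$.

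The main obstacle is precisely the discreteness of the image of $\Gamma$ in $G/A$. I expect to extract it from unimodularity: finite covolume forces $\Gamma\backslash G$ to carry a $G$-invariant probability measure, so $G$ is unimodular and $\det\op{Ad}(g)=1$ for all $g\in G$. This constrains the eigenvalues of the conjugation action of $G$ on $A\cong \R^m$ and, combined with the recurrence coming from finiteness of the measure, is what rules out a projection that spirals densely. This is the delicate step, and the one I would budget the most care for.

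For part (ii), part (i) lets me assume $\Gamma$ is uniform. Since $G$ is solvable we have $R=G$, so the corollary to Theorem \ref{thm:chevalley} gives that $G/N$ is abelian, say $G/N\cong \R^a\times T^b$. As $N$ is normal, Theorem \ref{heredity_and_closure} reduces the claim to showing that the image $\overline{\Gamma}$ of $\Gamma$ in $G/N$ is discrete. Its closure $H$ is a closed subgroup of $G/N$, and its preimage $\hat H\subseteq G$ is closed and contains $N$; since $\Gamma\backslash \hat H$ is a closed subset of the compact space $\Gamma\backslash G$, the group $\Gamma$ is cocompact in $\hat H$, and $\overline{\Gamma}$ is dense in $H$ by construction. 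The hard part will be to rule out $H^{0}\ne\{1\}$: a nontrivial identity component would make $\hat H^{0}$ a connected normal subgroup strictly larger than $N$ with abelian image $\hat H^{0}/N=H^{0}$, and I would aim to contradict the maximality of the nilradical by showing such an $\hat H^{0}$ is forced to be nilpotent, the density of $\overline{\Gamma}$ in $H^{0}$ together with the polycyclic structure of the uniform lattice $\Gamma$ pinning down the conjugation action on $\hat H^{0}$ as unipotent. This is the same phenomenon as in part (i), and I expect it to be the genuine content of Mostow's argument.
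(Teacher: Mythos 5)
The paper does not actually prove Theorem \ref{latts_in_sol}; it quotes both parts from Mostow (\cite{mostow_homogeneous_1962}, \cite{mostow_factor_1954}) and uses them as black boxes, so there is no in-paper argument to compare yours against. Judged on its own terms, your outline is a sensible plan but not yet a proof: the two steps you yourself flag as delicate are exactly where the entire content of Mostow's theorems lives, and neither is carried out. In part (i), the discreteness of the image of $\Gamma$ in $G/A$ (equivalently, that the last derived subgroup $A$ is $\Gamma$-hereditary) does not fall out of unimodularity: unimodularity gives $\det\op{Ad}(g)=1$ on all of $\lie{g}$, not on $A$, and even the correct determinant condition on $A$ does not by itself prevent $\Gamma A$ from failing to be closed. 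The standard arguments (see \cite[Ch.~3]{raghunathan}) instead analyze the closed subgroup $\overline{\Gamma A}$ (or $\overline{\Gamma N}$) and invoke a structure theorem for its identity component --- essentially the same tool as Theorem \ref{thm:solvable_closure} in this paper --- so you should expect to need an input of that kind rather than an eigenvalue count plus recurrence.

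In part (ii) the gap is the same and is sharper. You reduce to showing $H^{0}=\{1\}$ and propose to derive a contradiction with the maximality of $N$ by showing $\hat H^{0}$ is nilpotent; but an extension of an abelian group by a nilpotent group need not be nilpotent ($\R^2\rtimes\R$ with a hyperbolic or rotational action is already a counterexample), so ``$\hat H^{0}/N$ abelian and $N$ nilpotent'' gives you nothing for free. What you actually need is that the conjugation action of $\hat H^{0}$ on $\lie{n}$ is unipotent, and extracting that from the density of the image of $\Gamma$ in $H^{0}$ is precisely the Auslander--Mostow-type statement about $\overline{\Gamma N}^{0}$ that this paper also imports rather than proves. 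So your proposal correctly locates the difficulty in both parts, but it defers rather than closes the two essential gaps; as written it would not substitute for the citations.
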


\section{Cautionary examples} \label{sec:examples}

This section contains two examples illustrating the
necessity of the hypotheses in Theorem \ref{main}.
The second example is due to Starkov in \cite{starkov}.
We give some details in order to fill the gaps mentioned
by its review in \cite{hu84}.

\subsection{A lowest-dimensional group with non-hereditary radical}

The following example establishes the necessity of the hypothesis
(included in most versions of Theorem \ref{main}) that no compact
factor of $S$ acts trivially on $R$.

\begin{eg}[see e.g.\ {\cite[Ex.~2.3]{van_limbeek_riemannian_2014}}]
    \label{eg:minimal}
    Define
    \begin{align*}
        \rho: \Z &\to \R \times \op{SO}(3) \\
            n &\mapsto (n, A^n)
    \end{align*}
    where $A \in \op{SO}(3)$ has infinite order.
    Then $\rho(\Z)$ is a lattice in $\R \times \op{SO}(3)$,
    and the radical $\R$ of $\R \times \op{SO}(3)$ is not $\rho(\Z)$-hereditary.
\end{eg}
\begin{proof}
    Let $G = \R \times \op{SO}(3)$.
    The projection $G \to \R$ takes $\rho(\Z)$ injectively
    to the discrete group $\Z$,
    so $\rho(\Z)$ is discrete in $G$. A fundamental domain of the
    action of $\rho(\Z)$ on $G$ is contained in $[0,1] \times \op{SO}(3)$,
    which has finite volume. Therefore $\rho(\Z)$ is a lattice.

    Since $A$ has infinite order, $\rho(\Z) \cap \R = 0$,
    which is not a lattice in $\R$.
\end{proof}

\subsection{Starkov's counterexample}

The following example, due to Starkov, refutes Claim \ref{main_broken},
including the version written in \cite[Prop.~1.3]{wu}.
(See Remark \ref{rmk:what_is_broken}.)

\begin{eg}[{\cite{starkov}}] \label{eg:starkov}
    Let $\op{SO}(1,1)^0$ denote the identity component of $\op{SO}(1,1)$. The radical $R$ of
        \[ G = \left(\R^2 \rtimes \op{SO}(1,1)^0\right) \times \left(\R^3 \rtimes \op{SO}(3)\right) \]
    is not lattice-hereditary, as demonstrated by the following lattice $\Gamma$.

    Choose $(s,r) \in \op{SO}(1,1)^0 \times \op{SO}(3)$
    where $s$ and $r$ act with the following
    characteristic polynomials.
    \begin{align*}
        P_s(x) &= x^2 - \big(4 + \sqrt{8}\big)x + 1 \\
        P_r(x) &= \left(x^2 - \big(4 - \sqrt{8}\big)x + 1\right)(x-1)
    \end{align*}
    The basis of $\R^2 \times \R^3$
    in which $(s,r)$ acts in Frobenius normal form
    generates a group $\Gamma_0 \cong \Z^5$.
    Let $\Gamma$ be the group generated by $(s,r)$ and $\Gamma_0$.
\end{eg}
\begin{proof}[Proof that $\Gamma$ is a (uniform) lattice]
    Since $\Gamma_0$ is generated by a basis of $\R^2 \times \R^3$,
    it is a lattice in $\R^2 \times \R^3$. Then some open
    $V \subset \R^2 \times \R^3$ meets $\Gamma_0$ in only the identity.
    Define
        \[ W = V \cdot \left(\left\{a \in \op{SO}(1,1)^0 \left|\; \op{tr} a < 4 + \sqrt{8}\right.\right\} \times \op{SO}(3)\right) . \]
    In the topology on a semidirect product, $W$ is open in $G$.
    Since $\op{tr} s = 4 + \sqrt{8}$,
    the projection of $W \cap \Gamma$ to $\op{SO}(1,1)^0$ is trivial.

    The characteristic polynomial by which $(s,r)$ acts on $\R^2 \times \R^3$ is
        \[ P_s(x) P_r(x) = (x-1)(x^4 - 8x^3 + 10x^2 - 8x + 1) . \]
    This has integer coefficients,
    so conjugation by $(s,r)$ preserves $\Gamma_0$.
    Therefore elements of $\Gamma$ with trivial
    projection to $\op{SO}(1,1)^0$ lie in $\Gamma_0$.
    Then $W \cap \Gamma = W \cap \Gamma_0 = V \cap \Gamma_0$,
    which contains only the identity; so $\Gamma$ is discrete in $G$.

    If $U$ is a closed fundamental domain for the action of $\Gamma_0$ on $\R^2 \times \R^3$,
    then a closed fundamental domain for the action of $\Gamma$ on $G$ is the set
        \[ U \cdot \left(\left\{a \in \op{SO}(1,1)^0 \left|\; \op{tr} a^2 \leq 4 + \sqrt{8}\right.\right\} \times \op{SO}(3)\right) . \]
    In the topology on a semidirect product,
    this set is diffeomorphic to the product of $\op{SO}(3)$ and a $6$-cube;
    so $\Gamma \backslash G$ is compact.
\end{proof}
\begin{proof}[Proof that the radical is not $\Gamma$-hereditary]
    Projection $G \to \op{SO}(3)$ has simple image
    and solvable kernel $(\R^2 \rtimes \op{SO}(1,1)^0) \times \R^3$,
    so $R$ is this kernel.
    The eigenvalues of $r$ have Galois conjugates off the unit circle
    (namely the eigenvalues of $s$), so none are roots of unity.
    Then $r$ has infinite order in $\op{SO}(3)$,
    so $\Gamma \cap R$ is only the trivial $(s,r)$-translate
    $\Gamma_0$.

    By dropping the $\R^2 \times \R^3$ coordinates,
    $\Gamma_0 \backslash R$ surjects onto $\op{SO}(1,1)^0 \cong \R$;
    so $\Gamma_0$ is not a uniform lattice in $R$.
    Since lattices in solvable groups are uniform (Theorem \ref{latts_in_sol}(i)),
    $\Gamma_0$ is not a lattice in $R$.
\end{proof}

\begin{rmk}
    The discussion after \cite[I.4 Thm.~1.6]{onishchik2} includes the remark
    that $R$ admits no lattices. This appears to be a mistake, since $R$ is
    a product of $\R^2 \rtimes \op{SO}(1,1)^0$ and $\R^3$,
    both of which admit lattices.
    Explicitly, the above construction produces a lattice of $G$
    lying in $R$ when given
    \begin{align*}
        P_s(x) &= x^2 - 3x + 1 \\
        r &= \op{id}_{\R^3} .
    \end{align*}
\end{rmk}

\section{Proofs of Theorem \ref{main} and Corollary \ref{cor:main}} \label{sec:proof}

The proof given in this section follows the same general method as both
Wu's in \cite[1.3]{wu} and Mostow's in \cite[Lemma 3.9]{mostow}.
We repair the step made explicit by Wu and also
use it to prove Corollary \ref{cor:main}.

\subsection{A key Lemma}

Mostow and Wu prove the following Lemma using algebraic groups;
Wu appears to use a decomposition like the one in \cite[Thm.~1.5.6]{onishchik3}.
At the risk of causing
further confusion, we give yet another proof, using Lie algebras
and hiding the use of algebraic groups behind Chevalley's Theorem
(Thm.~\ref{thm:chevalley}).
\begin{lemma}[{\cite[Lemma 3.8]{mostow}}, see also {\cite[Lemma 1.1]{wu}}] \label{nilradical_is_maximal}
    Let $G$ be a connected Lie group whose semi\-simple part $S$ is compact.
    If $S$ contains no nontrivial connected closed subgroup
    that is normal in $G$, then the nilradical $N$ of $G$
    is a maximal connected nilpotent subgroup.
\end{lemma}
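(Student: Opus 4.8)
The plan is to pass to Lie algebras and prove that $\lie n$ is a maximal nilpotent subalgebra of $\lie g$; since connected nilpotent subgroups correspond to nilpotent subalgebras with inclusions preserved, this is equivalent to the assertion. Fix a Levi decomposition $\lie g = \lie r \oplus \lie s$ with $\lie s$ the (compact-type) Lie algebra of $S$, and let $\pi\colon \lie g \to \lie s$ be the projection along $\lie r$. Let $\lie m \supseteq \lie n$ be any nilpotent subalgebra; I want $\lie m = \lie n$. The first, purely solvable, step is to show $\lie m \cap \lie r = \lie n$: for $y \in \lie m \cap \lie r$ the operator $\op{ad}(y)|_{\lie n}$ is nilpotent (as $\lie n \subseteq \lie m$ and $\lie m$ is nilpotent), while Theorem~\ref{thm:chevalley} gives $[\lie r, y] \subseteq [\lie g, \lie r] \subseteq \lie n$, so $\op{ad}(y)$ kills $\lie r/\lie n$; hence $\op{ad}(y)|_{\lie r}$ is nilpotent and $\lie n + \R y$ is a nilpotent ideal of $\lie r$, forcing $y \in \lie n$. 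It therefore suffices to prove $\lie m \subseteq \lie r$.

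Next I would reformulate the hypothesis as $\lie z_{\lie s}(\lie r) = 0$, i.e.\ no nonzero element of $\lie s$ centralizes $\lie r$: the centralizer $\lie z_{\lie s}(\lie r)$ is an ideal of $\lie s$ that centralizes $\lie r$, hence an ideal of $\lie g$ contained in $\lie s$, and since $S$ is compact the corresponding connected subgroup is closed and normal in $G$, so by hypothesis it is trivial. Now suppose for contradiction that $\lie m \not\subseteq \lie r$, and choose $x \in \lie m$ with $u := \pi(x) \neq 0$, writing $x = x_{\lie r} + u$ with $x_{\lie r} := x - u \in \lie r$. As above $\op{ad}(x)|_{\lie n}$ is nilpotent, and since both $[x_{\lie r}, \lie r]$ and $[u, \lie r]$ lie in $\lie n$ by Theorem~\ref{thm:chevalley}, the map $\op{ad}(x)$ sends $\lie r$ into $\lie n$, so $\op{ad}(x)|_{\lie r}$ is nilpotent. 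On the quotient $\lie g/\lie r \cong \lie s$, however, $\op{ad}(x)$ acts as $\op{ad}_{\lie s}(u)$, which is \emph{semisimple} because $\lie s$ is of compact type, and nonzero because $\lie s$ has trivial center.

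The heart of the argument is to separate this surviving semisimple action using the Jordan decomposition $\op{ad}(x) = D_s + D_n$ into commuting semisimple and nilpotent derivations of $\lie g$ (both are derivations, as is standard). Since $\lie r$ and $\lie n$ are characteristic, $D_s$ preserves them and induces the Jordan semisimple parts on $\lie r$ and on $\lie g/\lie r$; thus $D_s|_{\lie r} = 0$ (the restriction of $\op{ad}(x)$ to $\lie r$ being nilpotent), while $D_s$ induces $\op{ad}_{\lie s}(u) \neq 0$ on $\lie g/\lie r$. For $s \in \lie s$ and $r \in \lie r$, the derivation identity together with $D_s r = 0$ and $[s,r] \in \lie r$ gives $0 = D_s[s,r] = [D_s s, r]$, so $D_s(\lie s) \subseteq \lie z_{\lie g}(\lie r)$, with $\pi(D_s(\lie s)) = [u, \lie s] \neq 0$. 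Hence the ideal $\lie z_{\lie g}(\lie r)$ of $\lie g$ is not contained in $\lie r$; its image in $\lie s$ is a nonzero ideal, so a Levi subalgebra of it is a nonzero semisimple subalgebra centralizing $\lie r$. Conjugating that subalgebra into $\lie s$ by an inner automorphism $\exp(\op{ad} z)$ with $z \in \lie n$ (Malcev's conjugacy theorem, which preserves $\lie r$ and the property of centralizing it) produces a nonzero subalgebra of $\lie z_{\lie s}(\lie r)$, contradicting $\lie z_{\lie s}(\lie r) = 0$.

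The step I expect to be the main obstacle is precisely this separation: a radical element can act on $\lie n$ with imaginary eigenvalues (for instance the rotation action in $\R^2 \rtimes \R$), so one cannot argue eigenvalue-by-eigenvalue that $u$'s semisimple contribution to $\op{ad}(x)|_{\lie n}$ survives and is not cancelled by that of $x_{\lie r}$. What rescues the argument is that Chevalley's Theorem forces \emph{all} of $\op{ad}(x)|_{\lie r}$ to be nilpotent, pushing the entire semisimple part $D_s$ off of $\lie r$ and onto the quotient $\lie s$, where compactness keeps it nonzero; the hypothesis then bites through $\lie z_{\lie s}(\lie r) = 0$. Compactness of $S$ is used twice---for the semisimplicity of $\op{ad}_{\lie s}(u)$ and for closedness of the offending normal subgroup---and is indispensable, since for noncompact $S$ a unipotent subgroup already enlarges $N$.
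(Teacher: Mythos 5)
Your proof is correct, and while it shares the paper's overall skeleton (pass to Lie algebras, use Chevalley's theorem $[\lie{g},\lie{r}]\subseteq\lie{n}$ to make the radical part act nilpotently, apply a Jordan decomposition, and contradict the hypothesis by producing a nonzero connected normal subgroup of $G$ inside $S$), the mechanism in the middle is genuinely different. The paper never takes a global Jordan decomposition of $\op{ad}(x)$: instead it \emph{averages} the $\lie{r}$-component of an element $r+s\in\lie{n}_1\smallsetminus\lie{n}$ over the compact group $S$ with respect to Haar measure, using Chevalley to see that the average $r'$ stays in the same coset mod $\lie{n}$; invariance of $r'$ makes $\op{ad}_{\lie{n}}(r'+s)$ and $\op{ad}_{\lie{n}}(s)$ commute, so $\op{ad}_{\lie{n}}r'=\op{ad}_{\lie{n}}(r'+s)-\op{ad}_{\lie{n}}s$ is itself a Jordan--Chevalley decomposition on $\lie{n}$, and uniqueness under the $S$-action yields elements $s-s'$ killed by $\op{ad}_{\lie{n}}$ whose orbit spans, directly, an ideal of $\lie{s}$ that is an ideal of $\lie{g}$ --- no Levi decomposition or Malcev conjugacy is needed at the end. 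Your version avoids the averaging by decomposing $\op{ad}_{\lie{g}}(x)$ globally, using the derivation identity to push $D_s(\lie{s})$ into $\lie{z}_{\lie{g}}(\lie{r})$, and then paying for it with the Levi/Malcev step to land a nonzero semisimple subalgebra in $\lie{z}_{\lie{s}}(\lie{r})$; correspondingly you use compactness of $S$ only for semisimplicity of $\op{ad}_{\lie{s}}(u)$ and closedness of the offending subgroup, whereas the paper also uses it for the Haar-measure average. Two of your steps deserve an explicit line of justification, though both are standard: the nilpotency of $\lie{n}+\R y$ (via Lie's theorem, or the characterization of the nilradical of a solvable algebra as its set of $\op{ad}$-nilpotent elements), and the fact that the semisimple part of a derivation is again a derivation.
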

\begin{proof}
    Let $\lie{n}$, $\lie{r}$, $\lie{g}$, and $\lie{s}$ be the Lie algebras
    of $N$, $R$ (the radical), $G$, and $S$. Suppose $N_1 \supsetneq N$ is a connected
    nilpotent subgroup of $G$ with Lie algebra $\lie{n}_1$. We show that if $S$ is
    compact then $\lie{s}$ contains a nonzero ideal of $\lie{g}$.

    Taking coordinates from the Levi decomposition
    $\lie{g} = \lie{r} + \lie{s}$ (a direct sum of vector spaces),
    pick $r+s \in \lie{n}_1 \smallsetminus \lie{n}$.
    Since $S$ is compact, we may make $r$ invariant by averaging.
    That is, if $\mu$ is normalized Haar measure on $S$, then
        \[ r'= \int_{g \in S} \op{Ad}_g (r) \,d\mu(g) \]
    is $S$-invariant.
    Since $S$ acts trivially on $\lie{r}/\lie{n}$ due to Chevalley's Theorem (Thm.~\ref{thm:chevalley}),
    $r' - r \in \lie{n}$. Thus $r' + s \in \lie{n}_1 \smallsetminus \lie{n}$.

    Let $\op{ad}_{\lie{n}}$ denote the adjoint action of $\lie{g}$ on $\lie{n}$.
    Since $\lie{n}_1$ is nilpotent and $r'$ is $S$-invariant,
    the following is a Jordan-Chevalley decomposition.
        \[ \op{ad}_{\lie{n}} r' = \op{ad}_{\lie{n}} (r'+s) - \op{ad}_{\lie{n}} s \]
    Acting by any element of $S$ fixes $r'$ and replaces $s$ with some $s'$.
    By uniqueness of the Jordan-Chevalley decomposition,
    $\op{ad}_{\lie{n}}$ is zero on the $S$-orbit $S(s-s') \subset \lie{s}$.
    The subspace this generates is an ideal $\lie{a}$ of $\lie{s}$ with trivial action on $\lie{n}$
    and trivial action on $\lie{r}/\lie{n}$---thus an ideal of $\lie{g}$.

    Since $\lie{r}/\lie{n}$ is abelian, any subalgebra of $\lie{r}$
    containing $\lie{n}$ is an ideal of $\lie{r}$. So $\lie{n}_1$,
    being nilpotent and properly containing the nilradical $\lie{n}$, cannot lie in $\lie{r}$.
    Thus we may assume $s \neq 0$. Then since $S$ is semisimple,
    we can take $s' \neq s$, which makes $\lie{a}$ nonzero.
\end{proof}

\subsection{Proof of Theorem \ref{main}}

\begin{notation}
    If $A$ is a subset of a topological group $G$,
    then $\overline{A}$ denotes its closure and $A^0$
    denotes its identity component.
\end{notation}

\begin{proof}[Proof of Theorem \ref{main}]
Let $G$ be a connected Lie group with solvable part $R$,
nilradical $N$, and semisimple part $S$.
Assume no nontrivial compact factor of $S$ acts trivially on $R$.
Given a lattice $\Gamma$ in $G$, let $R_1 = \overline{\Gamma R}^0$.

\paragraph{Step 1: $R_1$ is solvable.} \label{step:auslander}

Since $R$ is normal in $G$, the set $\Gamma R$ is a subgroup of $G$.
Then $R_1$ and $\Gamma R_1 = \Gamma(\overline{\Gamma R}^0) = \overline{\Gamma R}$
are both closed subgroups.

Solvability of $R_1$ will follow from this theorem of Auslander.
\begin{thm}[{\cite[Prop.~2]{auslander_radicals_1963}}; see also {\cite[Thm.~8.24]{raghunathan}}] \label{thm:solvable_closure}
    In a Lie group $G$, let $R$ be a closed, connected, simply connected
    normal solvable subgroup
    and let $\Gamma$ be a discrete subgroup. Then $\overline{\Gamma R}^0$ is solvable.
\end{thm}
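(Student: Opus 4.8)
The plan is to prove that $\lie{m}$, the Lie algebra of $M = \overline{\Gamma R}^0$, is solvable by ruling out a semisimple Levi factor. Since $R$ is connected and normal in $G$, it is a closed normal solvable subgroup of $M$, so by the Levi decomposition it is enough to show that the connected quotient $Q = M/R$ is solvable; $M$ is then a solvable-by-solvable extension. The link back to $\Gamma$ comes from a density computation: writing $\rho\colon G \to G/R$ for the quotient map, continuity gives $\rho(\overline{\Gamma R}) \subseteq \overline{\rho(\Gamma R)} = \overline{\rho(\Gamma)}$, while $\rho(\Gamma)$ is contained in the closed set $\rho(\overline{\Gamma R}) = \overline{\Gamma R}/R$; hence $\overline{\rho(\Gamma)} = \overline{\Gamma R}/R$. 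In particular the image of the discrete group $\Gamma$ is dense in $\overline{\Gamma R}/R$ and so accumulates on all of $Q$. The problem thus becomes: a connected Lie group $Q$, presented as $M/R$ with $R$ simply connected solvable, carries a dense accumulation of the image of a discrete subgroup $\Gamma$ of $G$---show $Q$ is solvable.

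Suppose instead that $Q$ is not solvable, so it has a nontrivial semisimple Levi factor $L$. The engine of the contradiction is that a semisimple group is perfect with a nondegenerate commutator near the identity: taking $q = \exp(tX)$ and $q' = \exp(tY)$ with $[X,Y] \neq 0$ gives $[q,q']$ at distance $O(t^2)$ from $e$ but nontrivial. Using that $\rho(\Gamma)$ accumulates on $Q$, I would choose $\gamma_n, \gamma_n' \in \Gamma$ with $\rho(\gamma_n), \rho(\gamma_n')$ approximating such $q, q'$ for $t = t_n \to 0$; then the commutators $c_n = [\gamma_n, \gamma_n'] \in \Gamma$ have images $\rho(c_n)$ tending to $e$ in $Q$ while remaining nontrivial. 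If the $c_n$ themselves tended to $e$ in $G$, their being distinct nontrivial elements of $\Gamma$ would contradict discreteness, finishing the proof.

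The main obstacle is exactly that $\rho(\gamma)$ small in $Q$ does not force $\gamma$ small in $G$: the component of $\gamma$ along the noncompact fiber $R$ is a priori unbounded, so a nontrivial $\rho$-image alone contradicts nothing. This is where the hypotheses do their work. Because $R$ is simply connected solvable it has no nontrivial compact subgroup and is diffeomorphic to Euclidean space, so $R \to M \to Q$ admits a global section and one may speak of the $R$-part of each element. Crucially, Chevalley's Theorem (Thm.~\ref{thm:chevalley}) gives $[\lie{g}, \lie{r}] \subseteq \lie{n}$, so forming a commutator pushes the $R$-part into the nilradical, and repeated commutators drive it down the lower central series of the nilpotent $\lie{n}$ until it vanishes. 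I would therefore organize the descent as an induction---on the derived length of $R$, with base case $R$ abelian (where the $\Gamma$-action on $R$ is linear), or directly on the nilpotency class of $\lie{n}$---trading the uncontrolled $R$-part for ever deeper, and ultimately trivial, corrections, thereby upgrading the image-level statement of the previous step to a genuine accumulation of distinct elements of $\Gamma$ at the identity of $G$. The delicate bookkeeping of keeping the $R$-parts controlled through this commutator descent while preserving the nontriviality contributed by the semisimple factor is the heart of the matter, and is precisely the point at which Auslander's proof replaces these hands-on estimates with the structure of the real algebraic group $\op{Aut}(\lie{r})$.
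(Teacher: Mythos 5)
The paper does not actually prove this statement --- it quotes it from Auslander and only supplies the reduction of the non--simply-connected case to it via universal covers --- so your proposal has to stand on its own, and as written it does not close. The preliminary reductions are fine: $R$ is a closed, connected, normal, solvable subgroup of $M=\overline{\Gamma R}^0$, so it suffices to show $Q=M/R$ is solvable (extension of solvable by solvable; no Levi decomposition needed), and your density computation showing $\overline{\rho(\Gamma)}=\overline{\Gamma R}/R$, hence density of $\rho(\Gamma)$ in the open identity component $Q$, is correct. The endgame is also sound in principle: nontrivial elements of the discrete group $\Gamma$ accumulating at $e$ in $G$ would be a contradiction. The gap is the entire middle step, which you correctly identify as ``the heart of the matter'' and then do not carry out.

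Concretely, the mechanism you propose for controlling the $R$-components --- that by Chevalley's Theorem a commutator ``pushes the $R$-part into the nilradical'' and iteration drives it down the lower central series --- conflates the Lie-algebra inclusion $[\lie{g},\lie{r}]\subseteq\lie{n}$ with an estimate on group commutators of elements whose $R$-components are unbounded. Take the simplest relevant case $R=N=\R^n$, $M=\R^n\rtimes Q$: for $\gamma=(v,q)$, $\gamma'=(w,q')$ the $R$-component of $[\gamma,\gamma']$ has the shape $\bigl(I-\op{Ad}(\cdot)\bigr)v+\cdots$, a small operator applied to a vector $v$ on which you have no bound whatsoever (density of $\rho(\Gamma)$ in $Q$ says nothing about which coset representatives over a given $q$ lie in $\Gamma$); the commutator already lies over $N$, so no further descent is available, and its $R$-part can be arbitrarily large. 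Worse, the adjoint action of the semisimple directions on $R$ can be expanding, so iterating commutators can grow the $R$-parts rather than shrink them. What your argument actually establishes is that if $Q$ were nonsolvable then $\rho(\Gamma)$ would contain nontrivial elements arbitrarily close to $e$ in $G/R$ --- which is no contradiction at all, since $\rho(\Gamma)$ is typically dense there. The missing control is exactly what Auslander obtains from the algebraic-group structure of $\op{Aut}(\lie{r})$ (and what Raghunathan's Theorem 8.24 obtains from a Zassenhaus-neighborhood argument); deferring to it, as your last sentence does, leaves the theorem unproved.
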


In our situation, $R$ is not simply connected. However,
Mostow notes in \cite[2.6.1]{mostow} that the conclusion still holds.
One can use the version in \cite[Thm.~8.24]{raghunathan} or derive it from the original as follows.

Let $\pi: \tilde{G} \to G$ be the universal cover of $G$.
The Levi decomposition of $\tilde{G}$ splits \cite[\S{}1.4.1]{onishchik3},
so the inclusion $R \hookrightarrow G$ lifts to an injection
on universal covers $\tilde{R} \to \tilde{G}$.
Multiplication by $\ker \pi$ preserves $\pi^{-1}(\Gamma R)$,
so $\pi$ restricts to a covering map
$\overline{\pi^{-1}(\Gamma) \tilde{R}} = \overline{\pi^{-1}(\Gamma R)} \to \overline{\Gamma R}$.
Then $\overline{\Gamma R}^0$ is covered by
$\overline{\pi^{-1}(\Gamma) \tilde{R}}^0$, which is solvable by
Theorem \ref{thm:solvable_closure}.

\paragraph{Step 2: The nilradical of $R_1$ is the nilradical of $G$.} \label{step:nilradical}

Using Borel's density theorem, Mostow proves the following.
\begin{lemma}[{\cite[Lemma 3.4(d)]{mostow}}]
    Let $R$ be the radical of a connected Lie group $G$.
    If $\Gamma$ is a closed subgroup of $G$ such that $\Gamma^0 \subseteq R$
    and $\Gamma \backslash G$ has finite volume,
    then $\overline{\Gamma R}^0 \subseteq RK$
    where $K$ is a maximal compact factor of the semisimple part $S$.
\end{lemma}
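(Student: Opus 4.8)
The plan is to transport the whole question to the semisimple quotient $\overline{S} = G/R$ and there apply Borel's density theorem, which is the tool the lemma advertises. Write $p\colon G \to \overline{S}$ for the projection. Since $R \subseteq \Gamma R$, the set $\Gamma R = p^{-1}(p(\Gamma))$ is $R$-saturated, so $\overline{\Gamma R} = p^{-1}\big(\overline{p(\Gamma)}\big)$ and hence $\overline{\Gamma R}^0 = p^{-1}(H^0)$, where $H := \overline{p(\Gamma)}$. Split $\overline{S} = \overline{S}_c\,\overline{S}_{\mathrm{nc}}$ into the product of its compact and noncompact simple factors. Since $\overline{S}_c = p(K)$, one checks $p^{-1}(\overline{S}_c) = RK$, so the assertion $\overline{\Gamma R}^0 \subseteq RK$ is equivalent to $H^0 \subseteq \overline{S}_c$. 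Letting $q\colon \overline{S} \to \overline{S}_{\mathrm{nc}}$ be the projection (its kernel $\overline{S}_c$ is compact) and $\phi := q \circ p$, properness of $q$ gives $q(H) = \overline{\phi(\Gamma)} =: L$ and carries $H^0$ onto $L^0$, so the goal becomes simply $L^0 = \{1\}$.

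First I would push the finite volume downstairs. The map $\Gamma\backslash G \to L\backslash \overline{S}_{\mathrm{nc}}$ sending $\Gamma g \mapsto L\phi(g)$ is well defined because $\phi(\Gamma) \subseteq L$; it is surjective and equivariant for the right actions of $G$ and $\overline{S}_{\mathrm{nc}}$ through $\phi$. Pushing the finite $G$-invariant measure on $\Gamma\backslash G$ forward along it produces a nonzero finite $\overline{S}_{\mathrm{nc}}$-invariant measure on $L\backslash \overline{S}_{\mathrm{nc}}$; that is, $L$ is a closed subgroup of finite covolume in $\overline{S}_{\mathrm{nc}}$.

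Now I would run Borel's density theorem. As $\overline{S}_{\mathrm{nc}}$ is semisimple with no compact factors, its finite-covolume closed subgroup $L$ is Zariski dense in $\op{Ad}(\overline{S}_{\mathrm{nc}})$; consequently every subspace of $\overline{\lie{s}}_{\mathrm{nc}}$ invariant under $\op{Ad}(L)$ is invariant under $\op{Ad}(\overline{S}_{\mathrm{nc}})$. Since $L$ normalizes its identity component, $\op{Lie}(L^0)$ is such a subspace, hence an ideal; thus $L^0$ is a product of simple factors of $\overline{S}_{\mathrm{nc}}$. To eliminate these I would bring in solvability: by Auslander's theorem (Theorem \ref{thm:solvable_closure}, applied as in Step 1, lifting to the universal cover to handle the non-simply-connected $R$), the group $\overline{\Gamma R}^0$ is solvable, so its homomorphic image $L^0 = q(H^0) = q\big(\overline{\Gamma R}^0/R\big)$ is a solvable group that is a product of simple factors — hence trivial. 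Therefore $H^0 \subseteq \overline{S}_c$ and $\overline{\Gamma R}^0 \subseteq RK$.

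The substance of the argument — and the step I expect to fight with — is the confinement of the non-discreteness of $p(\Gamma)$ to the \emph{compact} factors of $\overline{S}$: a dense image in a circle inside a compact factor, as occurs for the group $\R \times \op{SO}(3)$ of Example \ref{eg:minimal}, is harmless, whereas dense image in any noncompact factor must be excluded. This is exactly what the combination of Borel density (forcing $L^0$ to be a union of simple factors) and Auslander's solvability (forcing it trivial) accomplishes. Secondary technical points are verifying that the pushforward measure is genuinely invariant and finite, invoking Borel's density theorem in its finite-covolume rather than lattice form, and supplying the solvability input when $\Gamma$ is merely closed; for the last, one uses $\Gamma^0 \subseteq R = \ker p$, so that $p(\Gamma) \cong \Gamma/(\Gamma \cap R)$ is a quotient of the discrete group $\Gamma/\Gamma^0$ and the reduction proceeds as in the discrete case.
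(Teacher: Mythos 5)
The paper does not actually prove this statement: it is quoted verbatim from Mostow with only the remark that it is proved ``using Borel's density theorem,'' so there is no in-paper argument to compare against line by line. Your reconstruction is consistent with that advertised method and, as far as I can check, correct: the reduction $\overline{\Gamma R}^0 = p^{-1}(H^0)$ and $RK = p^{-1}(\overline{S}_c)$ is right (using that $p$ is open and has connected fibers), the pushforward of the finite invariant measure to $L\backslash\overline{S}_{\mathrm{nc}}$ is the standard argument behind Theorem \ref{lemma:nested}, and Borel density in its finite-covolume (rather than lattice) form does force $\op{Lie}(L^0)$ to be an ideal, so that $L^0$ is semisimple and solvability kills it. Two citations need to be in the right form. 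First, the Borel density theorem must be invoked for a closed subgroup of finite covolume, not just for a lattice; this version is standard (it is what Furstenberg's proof gives, and is the form in \cite[I.1 Thm.~8.2]{onishchik2}). Second, and this is your only genuinely shaky step: the solvability of $\overline{\Gamma R}^0$ when $\Gamma$ is merely closed with $\Gamma^0 \subseteq R$. Your proposed reduction---replacing $\Gamma$ by the ``discrete group $\Gamma/\Gamma^0$''---does not literally produce a discrete subgroup of $G$ to which Theorem \ref{thm:solvable_closure} applies (a set of component representatives of $\Gamma$ need not generate a discrete subgroup, and passing to $G/R$ begs the question of discreteness there). The clean fix is to cite the closed-subgroup form of the Auslander--Wang theorem, namely \cite[Thm.~8.24]{raghunathan}, which the paper itself offers as an alternative in Step 1: its hypothesis that $\Gamma^0$ be solvable is exactly what $\Gamma^0 \subseteq R$ supplies. (In the paper's actual application of the lemma, $\Gamma$ is a lattice and hence discrete, so this subtlety does not arise there.) With that substitution your argument stands.
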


Normal subgroups of $G$
lying in $S$ must commute with $R$ (since their tangent algebras are ideals
in the Lie algebra of $G$), so the hypothesis of
Lemma \ref{nilradical_is_maximal} is satisfied when
no compact factor of $S$ acts trivially on $R$.
Then $N$---which is the nilradical of $G$ and thus also that of $R$ and $RK$---is
a maximal connected nilpotent subgroup of $RK$
by Lemma \ref{nilradical_is_maximal}.
Since $R_1 \subseteq RK$, maximality makes $N$ the nilradical of $R_1$.

\paragraph{Step 3: $N$ is $\Gamma$-hereditary (Part (i)).}

This theorem of Mostow,
applied to the subgroups $\Gamma R_1 \supseteq \Gamma$, implies
$\Gamma R_1 / \Gamma = R_1 / (\Gamma \cap R_1)$ has finite volume.
\begin{thm}[{\cite[Lemma 2.5]{mostow_homogeneous_1962}}] \label{lemma:nested}
    Let $G$ be a locally compact topological group and let $F \supseteq E$
    be closed subgroups. If $G/E$ has a finite invariant measure $m$, then $G/F$
    and $F/E$ admit finite invariant measures of which $m$ is a product.
\end{thm}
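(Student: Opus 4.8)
The plan is to build the two quotient measures directly out of $m$ and then recognize $m$ as their product via the uniqueness of invariant measures on a homogeneous space. Write $\pi \colon G/E \to G/F$ for the natural $G$-equivariant projection, whose fibers are the translates of $F/E \subseteq G/E$. First I would dispose of $G/F$, which is the easy half: since $\pi$ is continuous and $G$-equivariant and $m$ is finite, the pushforward $\overline{m} = \pi_* m$ is automatically a finite $G$-invariant measure on $G/F$. In particular $G/F$ admits an invariant measure at all, which by the standard modular-function criterion is equivalent to $\Delta_G$ and $\Delta_F$ agreeing on $F$.

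Next I would produce the invariant measure on $F/E$, at first without worrying about its finiteness. The mere existence of $m$ already forces $\Delta_G = \Delta_E$ on $E$; combining this with the equality $\Delta_G = \Delta_F$ on $F$ from the previous step and restricting to $E \subseteq F$ gives $\Delta_F = \Delta_E$ on $E$. By the same existence criterion, $F/E$ therefore carries an $F$-invariant Radon measure $\nu$, unique up to a positive scalar.

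The heart of the argument is to show $m$ is the product of $\overline{m}$ and $\nu$, and to deduce finiteness of $\nu$ from this. Using $\nu$ I would define, for $f \in C_c(G/E)$, the iterated integral $I(f) = \int_{G/F}\big(\int_{F/E} f(g\cdot y)\,d\nu(y)\big)\,d\overline{m}(gF)$, where the inner integral runs over the fiber $\pi^{-1}(gF)$ identified with $F/E$ through the representative $g$. The $F$-invariance of $\nu$ makes the inner integral independent of the choice of $g$, so $I$ is a well-defined positive linear functional on $C_c(G/E)$, and the $G$-invariance of $\overline{m}$ then makes $I$ a $G$-invariant functional. By uniqueness of the nonzero $G$-invariant measure on $G/E$, the functional $I$ agrees with $m$ up to a positive scalar, which I absorb into the normalization of $\nu$; this is exactly the assertion that $m$ is the product of $\overline{m}$ and $\nu$. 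Finally, approximating the constant function $1$ from below by an increasing sequence in $C_c(G/E)$ (using $\sigma$-compactness of $G/E$) and applying monotone convergence to both sides yields $|m| = |\overline{m}|\cdot \nu(F/E)$; since $m$ is finite and $\overline{m}$ is a nonzero finite measure, $\nu(F/E)$ is finite.

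The step I expect to be the main obstacle is the construction in the third paragraph: verifying that $I$ is well defined and genuinely $G$-invariant, since this is where the fiberwise identification $\pi^{-1}(gF)\cong F/E$ and the left $F$-invariance of $\nu$ must interlock correctly, and where one must check that $\nu$---a priori only determined up to scale and not yet known to be finite---still makes the inner integral finite for compactly supported $f$. Once $I$ is in hand, invoking the uniqueness of invariant measures reduces the product identity, and with it the finiteness of $\nu$, to routine bookkeeping.
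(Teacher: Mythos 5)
The paper does not prove this statement---it is quoted verbatim from Mostow \cite[Lemma 2.5]{mostow_homogeneous_1962} and used as a black box---so there is no internal proof to compare against; your proposal is essentially the classical Weil-formula argument, which is also the route behind Mostow's original proof, and it is sound. Two small points deserve a line each in a final write-up: the $\sigma$-compactness of $G/E$ invoked at the end is not automatic for a general locally compact $G$ and should be justified (e.g.\ the orbits of an open $\sigma$-compact subgroup of $G$ are open, each has positive measure because $\operatorname{supp} m$ is closed, invariant, and hence all of $G/E$, so finiteness of $m$ forces countably many orbits); and to invoke uniqueness of the invariant measure on $G/E$ one should make sure both $m$ and the pushforward are Radon, which is cleanest if you define $\overline{m}$ by the functional $f \mapsto \int_{G/E} f\circ\pi \, dm$ on $C_c(G/F)$ rather than as a set-theoretic pushforward. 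With those glosses, the well-definedness and continuity checks you flag in the third paragraph are routine (uniform continuity of $f \in C_c(G/E)$ plus the fact that $F/E$ embeds as a closed subset of $G/E$, so each fiber meets $\operatorname{supp} f$ in a compact set of finite $\nu$-measure), and the monotone-convergence step delivers finiteness of $\nu$ exactly as you say.
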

Therefore $R_1$ is $\Gamma$-hereditary.
As the nilradical in a solvable group,
$N$ is lattice-hereditary in $R_1$ (Theorem \ref{latts_in_sol}(ii)).
Thus $\Gamma \cap R_1 \cap N = \Gamma \cap N$
is a lattice in $N$.
Since $\Gamma$ is an arbitrary lattice of $G$, this proves (i).

\paragraph{Step 4: Wu's reduction of Part (ii) to Part (i).} \label{step:last}

In this part, we assume additionally that $S$ acts on $R/N$ without
compact factors in the kernel. Since
$R/N$ is both the radical and the nilradical of $G/N$, it
is lattice-hereditary in $G/N$ by Part (i).

By Theorem \ref{heredity_and_closure}, heredity is equivalent
to having the quotient map take lattices to lattices.
So $G \to G/N \to (G/N)/(R/N) = G/R$ sends $\Gamma$ to a lattice.
Thus $R$ is lattice-hereditary in $G$.
\end{proof}

\begin{rmk} \label{rmk:what_is_broken}
    It is at the last step (step 4) above that Wu's proof in
    \cite[1.3]{wu} omits the condition involving $R/N$.
    Raghunathan's proof, in \cite[8.28]{raghunathan},
    stops after obtaining $\Gamma$-heredity of $N$
    and leaves readers to reconstruct the rest.
    (See, however, \cite[\S{ }2]{wu} where Wu discusses
    an earlier problem in Raghunathan's proof.)

    To spell it out, the problem is this: although the action
    of $S$ on $R$ might have no compact factors in the kernel,
    the same is not automatically guaranteed for the induced action
    of $S$ on $R/N$.

    For example: in Example \ref{eg:starkov},
    the nilradical $N$ of $G$ is $\R^2 \times \R^3$,
    and $N \cap \Gamma = \Gamma_0$ is indeed a lattice in $N$.
    Passing to $G/N$ yields Example \ref{eg:minimal}.
\end{rmk}

\begin{rmk} \label{rmk:chevalley}
    In view of Chevalley's Theorem (Thm.~\ref{thm:chevalley}),
    $G$ acts trivially on $R/N$.
    Thus $S$ has no compact factor acting trivially on $R/N$ if and only if
    $S$ has no compact factor.
    This simplification makes Part (ii) a case of
    Auslander's theorem \cite[Thm.~1]{auslander_radicals_1963}, which
    suggests the statement of Corollary \ref{cor:main}.
\end{rmk}

\subsection{Proof of Corollary \ref{cor:main}} \label{sec:proof_cor}

Let $G$ be a connected Lie group
with Levi decomposition $G = RS$ and nilradical $N$.
Let $C$ and $S_K$ be the maximal connected semisimple compact normal
subgroups of $G$ and $S$, respectively.
\begin{proof}[Proof of Corollary \ref{cor:main}]
    $C$ is compact and thus lattice-hereditary in $G$.
    It is normal by assumption and closed by compactness,
    so $G/C$ is a Lie group. We will pass to $G/C$ and continue this pattern.

    A normal subgroup of $S$ acting trivially on $R$ is normal in $G$,
    so $G/C$ satisfies Part (i) of Theorem \ref{main}.
    The nilradical of $G/C$ is $NC/C$, which is thus closed, normal,
    and lattice-hereditary.

    Since $S_K$ acts trivially on $R/N$ by Chevalley's Theorem (Thm.~\ref{thm:chevalley}) and is
    normal in $S$, its image $NS_K/(NC)$ in $G/(NC)$ is normal.
    Since $S_K$ is compact, $NS_K/(NC)$ is also closed and lattice-hereditary.

    $RS_K/(NS_K)$ is the nilradical of $G/(NS_K)$,
    whose semisimple part has no compact factors by the definition of $S_K$.
    So by Part (i) again, $RS_K / (NS_K)$ is lattice-hereditary in $G/(NS_K)$.

    Then by Theorem \ref{heredity_and_closure},
    a lattice in $G$ maps to a lattice under quotients by
    each of the subgroups $C \subseteq NC \subseteq NS_K \subseteq RS_K$.
    By the same theorem, each of these
    subgroups is lattice-hereditary in $G$.
\end{proof}

\section{Related results} \label{sec:others}

For a summary of other known results on heredity, see
\cite[\S{}2.1]{van_limbeek_riemannian_2014}.
Proofs can be found in \cite[\S{}I.1.4]{onishchik2}
and in \cite[Chapter 1]{raghunathan} starting with Theorem 1.12.

The version of Theorem \ref{main} in \cite[I.4 Thm.~1.6]{onishchik2} also
cites \cite{wolf}. A scan through the index
and through chapters with promising-looking section titles did not
reveal the location of this statement to the author.

When $G$ is a \emph{complex} Lie group, $S$ has no compact factors.
In this situation,
a shorter proof is possible, using
the Borel density theorem
(see e.g.\ \cite[I.1 Thm.~8.2]{onishchik2})
to show that $R_1 = R$.
See e.g.\ \cite[Thm.~3.5.3]{winkelmann}.

When $H$ is a connected, simply-connected solvable Lie group
and $K \subseteq \op{Aut} H$ is compact,
Dekimpe, Lee, and Raymond give conditions in \cite{dlr2001}
for $H$ to be lattice-hereditary in $H \rtimes K$.

Instead of studying $\Gamma/(\Gamma \cap R)$ in $G/R$,
one may take the quotient of $\Gamma$ by its maximal solvable normal subgroup.
In \cite[Lemma 6]{prasad}, Prasad relates this quotient to a lattice in
(a group covered by) $G/(RS_K)$.

\bibliographystyle{amsalpha}
\newcommand{\MR}[1]{\MRhref{#1}{MR #1}}
\providecommand{\MRhref}[2]{\href{http://www.ams.org/mathscinet-getitem?mr=#1}{#2}}
{\small \bibliography{lattices}}

\bigskip
\noindent
\textsc{Department of Mathematics} \\
\textsc{University of Chicago} \\
\textsc{Chicago, IL 60637} \\
\textit{E-mail address}: \url{ageng@uchicago.edu}

\end{document}